\documentclass{amsart}

\usepackage{fancyhdr}

\pagestyle{fancy}
\fancyhead{}
\fancyfoot{}
\fancyhead[CE]{\scriptsize\uppercase{Karol Hajduk $\&$ James Robinson}}
\fancyhead[CO]{\scriptsize\uppercase{Characterisation  of the fractional power spaces of operators}}
\fancyfoot{}
\fancyhead[LE, RO]{\scriptsize\thepage}

\usepackage{amsmath,amssymb,amsthm,mathrsfs}

\usepackage[colorlinks=true,linkcolor=blue]{hyperref}
\makeatletter
\renewcommand*{\eqref}[1]{%
  \hyperref[{#1}]{\textup{\tagform@{\ref*{#1}}}}%
}
\makeatother

\usepackage{enumitem,natbib,nicefrac}

\setcitestyle{numbers,open={[},close={]}}

\numberwithin{equation}{section}

\def\<{{\langle}}
\def\>{{\rangle}}
\def\R{{\mathbb R}}

\def\d{{\rm d}}

\def\:{\colon }

\def\dist{{\rm dist}}
\def\A{{\mathcal A}}
\def\P{{\mathcal P}}
\def\be#1{\begin{equation}\label{#1}}
\def\ee{\end{equation}}

\def\sbsection#1{\subsection{#1}$ $\smallskip}

\newtheorem{theorem}{Theorem}[section]

\newtheorem{corollary}[theorem]{Corollary}
\newtheorem{lemma}[theorem]{Lemma}

\providecommand{\abs}[1]{\left\lvert{#1}\right\rvert}



\begin{document}

\title{Explicit characterisation of the fractional power spaces of the Dirichlet Laplacian and Stokes operators}

\author{Karol W. Hajduk}
\address{Department of Mathematics and Statistics, Faculty of Science, Masaryk University,
Building 08, Kotl\'a{\v r}sk\'a 2, 611 37, Brno, Czech Republic}
\email{\href{mailto:hajduk@math.muni.cz}{hajduk@math.muni.cz}}
\thanks{KWH was supported by an EPSRC Standard DTG EP/M506679/1 and by the Warwick Mathematics Institute.}
\author{James C. Robinson}
\address{Mathematics Institute, Zeeman Building, University of Warwick,
Coventry, CV4 7AL, United Kingdom}
\email{\href{mailto:j.c.robinson@warwick.ac.uk}{j.c.robinson@warwick.ac.uk}}

\subjclass[2020]{Primary 47A05, 47F10; Secondary 35Q30, 46B70, 47A57, 76D05}

\date{August 06, 2021.}

\keywords{Fractional power spaces, Domains of operators, Laplacian, Stokes operator, Real interpolation, K-method, Interpolation spaces.}

\begin{abstract}
  We identify explicitly the fractional power spaces for the $L^2$ Dirichlet Laplacian and Dirichlet Stokes operators using the theory of real interpolation. The results are not new, but we hope that our arguments are relatively accessible.
\end{abstract}

%

\maketitle

\section{Introduction}


In many applications it is useful to have an explicit characterisation of the fractional power spaces of the Dirichlet Laplacian. (In our case this was prompted by a new approximation scheme that allows for simultaneous approximation in $L^2$-based Sobolev spaces and $L^p$ spaces, using weighted truncations of the eigenfunction expansion, see Fefferman, Hajduk, \& Robinson \cite{FHR}.) 

The results we present here are not new, but straightforward proofs are hard to find in the literature. Additionally, we use in our arguments the theory of real interpolation spaces rather than the complex interpolation used by other authors. The characterisation of the domains of the Dirichlet Laplacian can be found in the papers by Grisvard \cite{Grisvard}, Fujiwara \cite{Fujiwara67}, and Seeley \cite{Seeley3}. Note that  Fujiwara's statement is not correct for $\theta=3/4$, and that  Seeley also gives the corresponding characterisation for the operators in $L^p$-based spaces. For the Stokes operator $\A$, Giga \cite{Giga85} and Fujita \& Morimoto \cite{Fujita-Morimoto} both show that $D(\A)=D(A)\cap H_\sigma$; the former in the greater generality of $L^p$-based spaces. We use a key idea from the proof of Fujita \& Morimoto in our argument in Section \ref{sec:fracStokes}.

\section{Fractional power spaces of linear operators}\label{sb-XpDr}


We suppose that $H$ is a separable Hilbert space, with inner product $\<\cdot,\cdot\>$ and norm $\|\cdot\|$, and that $A$ is a positive, self-adjoint operator on $H$ with compact inverse. In this case $A$ has a complete set of orthonormal eigenfunctions $\{w_n\}$ with corresponding eigenvalues $\lambda_n>0$, which we order so that $\lambda_{n+1}\ge\lambda_n$.

Recall that for any $\alpha\ge0$ we can define $D(A^\alpha)$ as the subspace of $H$ where
\begin{equation}\label{As-def}
D(A^\alpha) :=\left\{u=\sum_{j=1}^\infty \hat u_jw_j:\ \sum_{j=1}^\infty\lambda_j^{2\alpha}|\hat u_j|^2<\infty\right\}.
\end{equation}
For $\alpha<0$ we can take this space to be the dual of $D(A^{-\alpha})$; the expression in \eqref{As-def} can then be understood as an element in the completion of the space of finite sums with respect to the $D(A^\alpha)$ norm defined below in \eqref{DAnorm}. For all $\alpha\in\R$ the space $D(A^\alpha)$ is a Hilbert space with inner product
$$
\<u,v\>_{D(A^\alpha)}:=\sum_{j=1}^\infty\lambda_j^{2\alpha}\hat u_j\hat v_j
$$
and corresponding norm
\be{DAnorm}
\|u\|_{D(A^\alpha)}^2 :=\sum_{j=1}^\infty\lambda_j^{2\alpha}|\hat u_j|^2
\ee
[note that $D(A^0)$ coincides with $H$]. We can define $A^\alpha\:D(A^\alpha)\to H$ as the mapping
$$
\sum_{j=1}^\infty \hat u_jw_j\mapsto \sum_{j=1}^\infty \lambda_j^\alpha\hat u_jw_j,
$$
and then $\|u\|_{D(A^\alpha)}=\|A^\alpha u\|$. Note that $A^\alpha$ also makes sense as a mapping from $D(A^\beta)\to D(A^{\beta-\alpha})$ for any $\beta\in\R$, and that for $\beta\ge\alpha\ge0$ we have
\begin{equation}\label{higherpower}
D(A^\beta)=\{u\in D(A^{\beta-\alpha}):\ A^{\beta-\alpha}u\in D(A^\alpha)\}.
\end{equation}

\section{Characterisation results}

We will prove the following theorem, which gives the explicit form of these fractional power spaces for the Dirichlet Laplacian and the Dirichlet Stokes operator. It combines the results of Lemma \ref{half}, Corollaries \ref{0thalf} and \ref{halft1}, and Lemma \ref{AnA}.

\begin{theorem}\label{fp-Dirichlet}
Let $\Omega$ be an open, bounded domain with smooth boundary. When $A$ is the negative Dirichlet Laplacian on $\Omega\subset\R^d$, $d\ge2$, we have
  $$
  D(A^\theta)=\begin{cases}
  H^{2\theta}(\Omega), &0<\theta< 1/4, \\
  H^{1/2}_{00}(\Omega),&\theta=1/4,\\
  H^{2\theta}_0(\Omega), &1/4<\theta\le 1/2, \\
  H^{2\theta}(\Omega)\cap H_0^1(\Omega), &1/2<\theta\le 1,
  \end{cases}
  $$
  where $H_{00}^{1/2}(\Omega)$ consists of all $u\in H^{1/2}(\Omega)$ such that
    $$
    \int_\Omega\rho(x)^{-1}|u(x)|^2\,\d x<\infty,
    $$
    with $\rho(x)$ any $C^\infty$ function comparable to $\dist(x,\partial\Omega)$. If $A$ is the Stokes operator on $\Omega$ with Dirichlet boundary conditions then the domains of the fractional powers of $A$ are as above, except that all spaces are intersected with
$$
H_\sigma:=\mbox{completion of }\{\phi\in [C_c^\infty(\Omega)]^d:\ \nabla\cdot\phi=0\}\mbox{ in the norm of }L^2(\Omega).
$$
\end{theorem}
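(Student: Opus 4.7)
The plan is to decompose the proof into the four independent pieces listed after Theorem \ref{fp-Dirichlet} and concatenate them. The anchor is \textbf{Lemma \ref{half}}, which gives $D(A^{1/2})=H^1_0(\Omega)$: for smooth $u\in D(A)$, integration by parts yields $\|A^{1/2}u\|^2=\langle Au,u\rangle=\|\nabla u\|^2$, so $D(A^{1/2})$ embeds isometrically into $H^1_0(\Omega)$, and density of $C_c^\infty(\Omega)$ in $H^1_0(\Omega)$ (coupled with the fact that $C_c^\infty(\Omega)\subset D(A^{1/2})$) gives the reverse.

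For $0<\theta<1/2$ (\textbf{Corollary \ref{0thalf}}) I would use the fact that the spectral formula \eqref{As-def} immediately yields the K-method identification $D(A^\theta)=[H,D(A^{1/2})]_{2\theta,2}$ for any positive self-adjoint $A$ with compact inverse---this is a direct computation once one writes the K-functional in the eigenbasis. Combined with Lemma \ref{half} this reduces the problem to the Lions--Magenes description of $[L^2(\Omega),H^1_0(\Omega)]_{s,2}$, namely $H^s(\Omega)$ for $s<1/2$, $H^{1/2}_{00}(\Omega)$ for $s=1/2$, and $H^s_0(\Omega)$ for $1/2<s\le 1$, taking $s=2\theta$.

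For $1/2<\theta\le 1$ (\textbf{Corollary \ref{halft1}}) I would start from elliptic regularity ($D(A)=H^2(\Omega)\cap H^1_0(\Omega)$) and the K-method formula $D(A^\theta)=[D(A^{1/2}),D(A)]_{2\theta-1,2}$. The containment $D(A^\theta)\subset H^{2\theta}(\Omega)\cap H^1_0(\Omega)$ falls out of interpolating the endpoint inclusions separately. The converse is less direct: given $u\in H^{2\theta}(\Omega)\cap H^1_0(\Omega)$, one can either construct an admissible decomposition $u=v+w$ realising the correct K-functional growth, or transfer the question down the scale by noting that $-\Delta u$ lies distributionally in $H^{2\theta-2}(\Omega)$, which by the negative-order (dual) version of the second step coincides with $D(A^{\theta-1})$, so $u=A^{-1}(-\Delta u)\in D(A^\theta)$.

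Finally, \textbf{Lemma \ref{AnA}} handles the Stokes operator. Following the Fujita--Morimoto idea flagged in the introduction, I would show that the Helmholtz projector $P$ intertwines the resolvents of $A$ and $\A$ so that the K-functional of any $u\in H_\sigma$ in $(H_\sigma,D(\A))$ matches that in $(L^2(\Omega),D(A))$ up to constants; the K-method characterisation then yields $D(\A^\theta)=D(A^\theta)\cap H_\sigma$ and the Dirichlet result transfers wholesale. I expect Corollary \ref{halft1} to be the main obstacle: Lions--Magenes does not cover the scale $[H^1_0,H^2\cap H^1_0]$ directly, so one must either redo the K-functional estimate or go through $A$ itself; moreover this is precisely the range containing $\theta=3/4$, where Fujiwara's earlier statement was incorrect, and one must verify that no spurious half-integer trace condition intrudes at $2\theta=3/2$---the correct description remains simply $H^{3/2}(\Omega)\cap H^1_0(\Omega)$.
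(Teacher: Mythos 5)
Your treatment of Lemma \ref{half} and of the range $0<\theta<1/2$ is essentially the paper's own argument (eigenbasis computation of the $K$-functional, reiteration, then the Lions--Magenes identification of $(L^2,H_0^1)_s$), so those parts are fine; the two places where you depart from the paper are precisely where the gaps lie. The first is the Stokes operator: your key claim, that the Leray projection $\P$ ``intertwines the resolvents of $A$ and $\A$'', is false on a bounded domain. The projection $\P$ preserves the vanishing normal trace but not the full Dirichlet condition, so $\P$ does not map $D(A)$ into $D(\A)$ and $\P(A+\lambda)^{-1}\neq(\A+\lambda)^{-1}\P$; if such an intertwining held, the Stokes semigroup would be the Leray projection of the componentwise heat semigroup, which is well known to fail in the presence of a boundary. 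The paper's route is genuinely different: it defines $\tilde T:=\A^{-1}\P A$ on $D(A)$, which is bounded from $D(A)$ to $D(\A)$ by Stokes regularity ($\|\A^{-1}g\|_{H^2}\le C\|g\|_{L^2}$), shows by a duality computation (self-adjointness of $A$ and $\A$, symmetry of $\P$) that $\tilde T$ extends to a bounded map $T\:L^2\to H_\sigma$ equal to the identity on $H_\sigma$, and then applies the intersection lemma (Lemma \ref{isitnew}) to get $(H_\sigma,D(A)\cap H_\sigma)_\theta=(L^2,D(A))_\theta\cap H_\sigma$. That lemma is the paper's main technical device and is entirely absent from your proposal; your Stokes argument cannot run without it or an equivalent substitute.

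The second gap is that your bootstrap for $1/2<\theta\le1$ breaks down exactly at the value you flag as delicate, $\theta=3/4$. Dualising Corollary \ref{0thalf} gives $D(A^{\theta-1})=(D(A^{1-\theta}))^*=H^{2\theta-2}(\Omega)$ only when $1-\theta\neq1/4$; at $\theta=3/4$ one has $D(A^{-1/4})=(H^{1/2}_{00}(\Omega))^*$, which is strictly larger than $H^{-1/2}(\Omega)$, so the asserted coincidence of $H^{2\theta-2}(\Omega)$ with $D(A^{\theta-1})$ is simply wrong there. Moreover, the natural proof that $-\Delta u\in H^{2\theta-2}(\Omega)$ for $u\in H^{2\theta}(\Omega)$ pairs $-\Delta u$ against test functions extended by zero, and extension by zero is bounded on $H_0^{2-2\theta}(\Omega)$ precisely when $2-2\theta\neq1/2$ (the paper cites Theorem I.11.4 of Lions--Magenes for this); so both ingredients of your step fail simultaneously at $\theta=3/4$. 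The step is repairable --- extension by zero \emph{is} bounded on $H^{1/2}_{00}$, so one obtains $-\Delta u\in(H^{1/2}_{00})^*=D(A^{-1/4})$, which is the membership actually needed --- but that repair must be made explicitly, and as written your argument asserts a false identity at the one point where Fujiwara's characterisation also went wrong. The paper sidesteps all of this: it writes $D(A^\theta)=(H_0^1,H^2\cap H_0^1)_{2\theta-1}$ and applies Lemma \ref{isitnew} with $T$ the elliptic projection of Lemma \ref{lm} (the solution of $\<\nabla Tu,\nabla\phi\>=\<\nabla u,\nabla\phi\>$ for all $\phi\in H_0^1$), which is bounded $H^s\to H^s\cap H_0^1$ for $s=1,2$ and the identity on $H_0^1$, yielding $(H^1,H^2)_{2\theta-1}\cap H_0^1=H^{2\theta}\cap H_0^1$ uniformly over $1/2<\theta<1$ with no exceptional value of $\theta$.
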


\section{Fractional power spaces and real interpolation}

We begin with a very quick treatment of the fractional powers of positive unbounded self-adjoint compact-inverse operators on a Hilbert space; in this case it is easy to show that the fractional power spaces are given as real interpolation spaces (cf.\ Chapter 1 of Lions \& Magenes \cite{LionsMagenes1}, from which we quote a number of results in what follows).

\sbsection{Real interpolation (`$K$-method')}

We recall the method of `real interpolation', due to Lions \& Peetre \cite{Lions59,LionsPeetre} as adopted by Lions \& Magenes; their $\theta$-intermediate space corresponds to the $(\theta,2; K)$ interpolation space in the more general theory covered in  Adams \& Fournier \cite{AdamsFournier} or Lunardi \cite{Lunardi}, for example.

We suppose that $X$ and $Y$ are Banach spaces, both continuously embedded in some Hausdorff topological vector space $B$. For any $u\in X+Y$ we define
\begin{equation}\label{ourK}
K(u,t) := \inf_{x+y=u}\left(\|x\|^2_X+t^2\|y\|_{Y}^2\right)^{1/2};
\end{equation}
we follow \cite{LionsMagenes1} in choosing this particular form for $K$. We define
\be{is-def}
(X,Y)_\theta:=\left\{u\in X+Y\: t^{-\theta}K(u,t)\in L^2(0,\infty;\textstyle{\nicefrac{\d t}{t}})\right\};
\ee
this is a Banach space with norm
$$
\|u\|_\theta:=\left\|t^{-\theta}K(u,t)\right\|_{L^2(0,\infty;\nicefrac{\d t}{t})}.
$$
[Since $a^2+t^2b^2\le(a+tb)^2\le 2(a^2+t^2b^2)$ this is equivalent to the standard definition of the space $(X,Y)_{\theta,2; K}$, in which $K$ is not defined as in \eqref{ourK} but rather as $K(u,t):=\inf_{x+y=u}\|x\|_X+t\|y\|_Y$. The definition we adopt here is more suited to the Hilbert space case.]

\sbsection{Fractional power spaces via real interpolation}

 We now give a simple proof that the fractional power spaces of $A$ are given by real interpolation spaces when $A$ is a positive unbounded self-adjoint operator with compact inverse (cf.\ Theorem I.15.1 in \cite{LionsMagenes1}).

\begin{lemma}\label{fps}
  Suppose that $A$ is a positive unbounded self-adjoint operator with compact inverse and domain $D(A)$ in a Hilbert space $H$ (as in Section \ref{sb-XpDr}). Then
  \be{fpri}
  (H,D(A))_{\theta} = D(A^\theta), \qquad 0<\theta<1.
  \ee
\end{lemma}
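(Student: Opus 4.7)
The plan is to work directly with the spectral decomposition of $A$, since everything is explicit in the eigenbasis $\{w_j\}$. For $u=\sum_j\hat u_jw_j\in H$, any decomposition $u=x+y$ with $x\in H$ and $y\in D(A)$ can be written coordinatewise as $x=\sum_j x_jw_j$, $y=\sum_j y_jw_j$ with $x_j+y_j=\hat u_j$, and then
$$
\|x\|^2+t^2\|y\|_{D(A)}^2=\sum_{j=1}^\infty\bigl(|x_j|^2+t^2\lambda_j^2|y_j|^2\bigr).
$$
The minimisation in the definition \eqref{ourK} of $K(u,t)$ therefore decouples into a scalar problem for each $j$, which I would solve explicitly (the optimum is $x_j=t^2\lambda_j^2(1+t^2\lambda_j^2)^{-1}\hat u_j$, $y_j=(1+t^2\lambda_j^2)^{-1}\hat u_j$) to obtain
$$
K(u,t)^2=\sum_{j=1}^\infty\frac{t^2\lambda_j^2}{1+t^2\lambda_j^2}|\hat u_j|^2.
$$

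Next I would insert this expression into the interpolation norm and swap the sum and integral by Tonelli (everything is nonnegative):
$$
\|u\|_\theta^2=\int_0^\infty t^{-2\theta}K(u,t)^2\,\frac{\d t}{t}=\sum_{j=1}^\infty|\hat u_j|^2\int_0^\infty\frac{t^{1-2\theta}\lambda_j^2}{1+t^2\lambda_j^2}\,\d t.
$$
The change of variables $s=t\lambda_j$ in each integral gives
$$
\int_0^\infty\frac{t^{1-2\theta}\lambda_j^2}{1+t^2\lambda_j^2}\,\d t=\lambda_j^{2\theta}\int_0^\infty\frac{s^{1-2\theta}}{1+s^2}\,\d s=:C_\theta\,\lambda_j^{2\theta},
$$
and one checks that $C_\theta\in(0,\infty)$ precisely when $0<\theta<1$, which is exactly the range for which the interpolation space is nontrivial. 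Substituting back,
$$
\|u\|_\theta^2=C_\theta\sum_{j=1}^\infty\lambda_j^{2\theta}|\hat u_j|^2=C_\theta\|u\|_{D(A^\theta)}^2,
$$
so the two norms are equivalent (in fact proportional), and since both spaces are defined as those $u\in H$ for which the respective norm is finite, they coincide as sets, yielding \eqref{fpri}.

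The only technical points are trivialities: $H+D(A)=H$ since $D(A)\subset H$, so every $u\in H$ is a candidate for the infimum and the decomposition $u=u+0$ gives $K(u,t)\le\|u\|$ (finite), while the optimal choice above gives a minimiser rather than just an infimum. No single step is a real obstacle; the main thing to be careful about is tracking the two-norm form of $K$ in \eqref{ourK} (with the squares inside the infimum) so that the per-coordinate minimisation is a clean scalar optimisation, and verifying the range of $\theta$ for which $\int_0^\infty s^{1-2\theta}(1+s^2)^{-1}\,\d s$ converges at both endpoints.
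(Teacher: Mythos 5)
Your proposal is correct and follows essentially the same route as the paper's own proof: diagonalise $K(u,t)$ in the eigenbasis, solve the scalar minimisation to get $K(u,t)^2=\sum_j t^2\lambda_j^2|\hat u_j|^2/(1+t^2\lambda_j^2)$, then exchange sum and integral and rescale $s=t\lambda_j$ to identify $\|u\|_\theta^2$ as a constant multiple of $\|u\|_{D(A^\theta)}^2$. The only differences are cosmetic: you record the explicit minimiser and the endpoint convergence check, which the paper leaves as ``a simple minimisation'' and a remark that $I(\theta)<\infty$ for $0<\theta<1$.
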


\noindent  (A similar result holds for general positive self-adjoint operators on Hilbert spaces. One can obtain \eqref{fpri} using complex interpolation provided that the imaginary powers of $A$ are bounded, which they are in this case (see Seeley \cite{Seeley2}); since real and complex interpolation spaces coincide for Hilbert spaces (see Chapter 1 of Triebel \cite{Triebel}), \eqref{fpri} then holds using real interpolation in this more general setting; for a related discussion see Chapter I, Section 2.9 in the book by Amann \cite{Amann}. See also the two papers \cite{Seeley1,Seeley3} by Seeley.)

\begin{proof}

For $u=\sum_{j=1}^\infty \hat u_jw_j$ we have
$$
K(u,t)=\inf_{(y_j)}\left[\sum_{j = 1}^{\infty} \abs{\hat u_j-y_j}^2+t^2\lambda_j^2\abs{y_j}^2\right]^{1/2}.
$$
A simple minimisation over $(y_j)$ shows that
$$
K(u,t)=\left(\sum_{j = 1}^{\infty}\frac{t^2\lambda_j^2 \abs{\hat u_j}^2}{1+t^2\lambda_j^2}\right)^{1/2}.
$$
Now observe that
\begin{align*}
\int_{0}^{\infty} t^{-2\theta}K(u,t)^2\,\frac{\d t}{t}&=\int_{0}^{\infty} \sum_{j = 1}^{\infty}\frac{(t^2\lambda_j^2)^{1-\theta}}{1+t^2\lambda_j^2}\,\lambda_j^{2\theta}\abs{\hat u_j}^2\,\frac{\d t}{t}\\
&=\sum_{j = 1}^{\infty}\int_{0}^{\infty}\frac{(t^2\lambda_j^2)^{1-\theta}}{1+t^2\lambda_j^2}\,\lambda_j^{2\theta}\abs{\hat u_j}^2\,\frac{\d t}{t}\\
&=\sum_{j = 1}^{\infty}\lambda_j^{2\theta}\abs{\hat u_j}^2\int_{0}^{\infty}\frac{s^{1-2\theta}}{1+s^2}\,\d s\\
&=I(\theta)\sum_{j = 1}^{\infty}\lambda_j^{2\theta}\abs{\hat u_j}^2 = I(\theta)\|A^{\theta}u\|^2\\
&=I(\theta)\|u\|_{D(A^\theta)}^2,
\end{align*}
where
$$
I(\theta) = \int_0^\infty\frac{s^{1-2\theta}}{1+s^2}\,\d s<\infty
$$
for $0 < \theta < 1$. (In fact the integral can be evaluated explicitly using contour integration to give $I(\theta)=\frac{\pi}{2}\frac{1}{\sin(\pi\theta)}$.) It follows that
$u\in (H,D(A))_\theta$ if and only if $u\in D(A^\theta)$.
  \end{proof}

The following particular cases of the `reiteration theorem' \cite[Theorem 1.6.1]{LionsMagenes1} are simple corollaries of the above result.

  \begin{corollary}\label{reit}
  In the same setting as that of Lemma \ref{fps}
  $$
  (H,D(A^{1/2}))_{\theta} = D(A^{\theta/2})\qquad\mbox{and}\qquad (D(A^{1/2}),D(A))_{\theta} = D(A^{(1+\theta)/2}).
  $$
  \end{corollary}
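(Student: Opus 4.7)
My plan is to observe that both identities are immediate consequences of Lemma \ref{fps}, applied either to a different operator or transported through an isometry, rather than invoking the full reiteration theorem.

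For the first identity, I would note that $B := A^{1/2}$ itself satisfies the hypotheses of Lemma \ref{fps}: it is a positive unbounded self-adjoint operator on $H$ with compact inverse, with eigenbasis $\{w_j\}$ and eigenvalues $\lambda_j^{1/2}$. Since $D(B) = D(A^{1/2})$ and $D(B^\theta) = D(A^{\theta/2})$, applying Lemma \ref{fps} to $B$ in place of $A$ gives the first identity at once.

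For the second identity I would transport the first through the unitary isomorphism furnished by $A^{1/2}$. When each $D(A^\alpha)$ is equipped with the inner product induced by \eqref{DAnorm}, the map $A^{1/2}$ is simultaneously a unitary isomorphism $D(A^{1/2})\to H$ and $D(A)\to D(A^{1/2})$. Because the $K$-functional \eqref{ourK} depends only on the norms of the two ambient spaces, a bijection that acts isometrically on both components of the pair preserves $K(u,t)$, and hence preserves the interpolation space. Consequently $A^{1/2}$ induces an isometric isomorphism $(D(A^{1/2}),D(A))_\theta\to(H,D(A^{1/2}))_\theta = D(A^{\theta/2})$, whose preimage is $(A^{1/2})^{-1}D(A^{\theta/2}) = D(A^{(1+\theta)/2})$.

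As an alternative I could repeat the explicit eigenfunction-wise minimisation from the proof of Lemma \ref{fps} for the general pair $(D(A^\alpha),D(A^\beta))$ with $\alpha<\beta$: the minimiser is $y_j = \hat u_j/(1+t^2\lambda_j^{2(\beta-\alpha)})$, and after the substitution $s = t\lambda_j^{\beta-\alpha}$ the resulting integral collapses to $I(\theta)\sum_j\lambda_j^{2((1-\theta)\alpha+\theta\beta)}|\hat u_j|^2$, giving $(D(A^\alpha),D(A^\beta))_\theta = D(A^{(1-\theta)\alpha+\theta\beta})$, of which both claims of the corollary are special cases. There is no substantive obstacle here; the only point worth checking with care is the invariance of $K(u,t)$ under the simultaneous isometry $A^{1/2}$, which is immediate from the defining formula \eqref{ourK}.
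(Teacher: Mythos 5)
Your proposal is correct. For the first identity you do exactly what the paper does: apply Lemma \ref{fps} to $B=A^{1/2}$. For the second identity the paper instead re-applies Lemma \ref{fps} with the operator $A^{1/2}$ acting on the base space $D(A^{1/2})$ (whose domain there is $D(A)$), and then identifies the resulting fractional domain via $D(A^{(1+\theta)/2})=\{u\in H:\ A^{1/2}u\in D(A^{\theta/2})\}$; you reach the same conclusion by transporting the first identity through the simultaneous isometry $A^{1/2}\colon\bigl(D(A^{1/2}),D(A)\bigr)\to\bigl(H,D(A^{1/2})\bigr)$, noting that $K$ in \eqref{ourK} depends only on the norms of the two spaces. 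The two routes are equivalent in substance---both ultimately rest on \eqref{higherpower}---but yours trades the need to re-verify the hypotheses of Lemma \ref{fps} (positivity, self-adjointness, compact inverse) for $A^{1/2}$ on the Hilbert space $D(A^{1/2})$ against the easy check that a map which is isometric on both components of a couple preserves the $K$-functional, which holds because decompositions $u=x+y$ correspond bijectively to decompositions $A^{1/2}u=A^{1/2}x+A^{1/2}y$ with matching norms. Your alternative argument---redoing the explicit minimisation of Lemma \ref{fps} for the pair $(D(A^\alpha),D(A^\beta))$---is also sound: the minimiser $y_j=\hat u_j/(1+t^2\lambda_j^{2(\beta-\alpha)})$ and the substitution $s=t\lambda_j^{\beta-\alpha}$ are as you state, and this yields the cleaner general identity $(D(A^\alpha),D(A^\beta))_\theta=D(A^{(1-\theta)\alpha+\theta\beta})$, of which both claims of the corollary are special cases; this is arguably the most self-contained of the three routes, at the cost of repeating the computation rather than reusing the lemma as a black box.
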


  \begin{proof}
    For the first equality we apply Lemma \ref{fps} with $A$ replaced by $A^{1/2}$; for the second we apply Lemma \ref{fps} with $A$ replaced by $A^{1/2}$ and the `base space' $H$ replaced by $D(A^{1/2})$, and note that
    $$D(A^{(1+\theta)/2})=\{u\in H:\ A^{1/2}u\in D(A^{\theta/2})\}. \eqno \qedhere$$
  \end{proof}

  To obtain fractional powers of operators with boundary conditions, or other constraints (e.g.\ the divergence-free constraint associated with the Stokes operator) the following simple result will be useful: it provides one way to circumvent the fact that interpolation does not respect intersections, i.e.\ in general
  $$
  (X\cap Z,Y\cap Z)_\theta\neq(X,Y)_\theta\cap Z.
  $$
  A similar result can be found as Theorem 14.3 in \cite{LionsMagenes1} in the context of holomorphic interpolation.

  (A related result can be found as Proposition A.2 in Rodr{\'\i}guez-Bernal \cite{ARB}.)

\begin{lemma}\label{isitnew}
     Let $(H,\|\cdot\|_H)$ and $(D,\|\cdot\|_D)$ be Hilbert spaces, with $H_0$ a Hilbert subspace of $H$ (i.e.\ with the same norm) and $D\subset H$ with continuous inclusion. Suppose that there exists a bounded linear map $T\:H\to H_0$ such that $T|_{H_0}$ is the identity and $T|_{D} \: D \to D \cap H_0$ is also bounded, in the sense that
     $$
     \|Tf\|_D\le C\|f\|_D\qquad\mbox{for some }C>0\mbox{ for every }f\in D.
     $$
     Then for every $0<\theta<1$
 $$
 (H_0,D\cap H_0)_\theta=(H,D)_\theta\cap H_0
 $$
 with  norm equivalent to that in $(H,D)_\theta$.
\end{lemma}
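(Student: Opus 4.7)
The plan is to show the two set inclusions separately at the level of the $K$-functionals and then integrate, which directly yields the equivalence of norms.

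For the easy inclusion $(H_0, D\cap H_0)_\theta \subseteq (H,D)_\theta \cap H_0$, I would simply observe that any admissible decomposition $u = x + y$ with $x \in H_0$, $y \in D \cap H_0$ is automatically admissible for the pair $(H, D)$ since $H_0 \subseteq H$ (with the same norm, by hypothesis) and $D \cap H_0 \subseteq D$. Taking infima,
$$
K_{(H,D)}(u,t) \le K_{(H_0, D\cap H_0)}(u,t),
$$
so after multiplying by $t^{-\theta}$ and taking the $L^2(0,\infty; \d t/t)$-norm, any $u \in (H_0, D\cap H_0)_\theta$ lies in $(H,D)_\theta$ (and trivially in $H_0$), with $\|u\|_{(H,D)_\theta} \le \|u\|_{(H_0, D\cap H_0)_\theta}$.

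For the harder inclusion $(H,D)_\theta \cap H_0 \subseteq (H_0, D\cap H_0)_\theta$, the idea is to use $T$ to project decompositions in $H + D$ onto decompositions in $H_0 + (D \cap H_0)$. Given $u \in H_0$ and any decomposition $u = x + y$ with $x \in H$, $y \in D$, apply $T$: since $T|_{H_0}$ is the identity,
$$
u = Tu = Tx + Ty,
$$
with $Tx \in H_0$ and $Ty \in D \cap H_0$ by the two boundedness hypotheses on $T$. Writing $M := \max(\|T\|_{H \to H_0}, C)$ and using that the norm on $H_0$ agrees with that on $H$, I get
$$
\|Tx\|_{H_0}^2 + t^2 \|Ty\|_D^2 \le M^2 (\|x\|_H^2 + t^2 \|y\|_D^2).
$$
Taking the infimum over all decompositions $u = x + y$ in $H + D$ gives
$$
K_{(H_0, D\cap H_0)}(u, t) \le M\, K_{(H,D)}(u, t),
$$
and once more multiplying by $t^{-\theta}$ and integrating yields $\|u\|_{(H_0, D\cap H_0)_\theta} \le M \|u\|_{(H,D)_\theta}$.

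The only subtlety worth flagging is making sure $Tu = u$ is actually used to obtain a genuine decomposition of $u$ itself in $H_0 + (D\cap H_0)$, rather than just a decomposition of $Tu$; this is why the hypothesis that $T|_{H_0}$ is the identity (rather than merely a bounded projection) is essential. Beyond that, the argument is entirely mechanical, and there is no significant obstacle.
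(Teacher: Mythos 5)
Your proof is correct and takes essentially the same approach as the paper: both inclusions are established exactly as in the paper's proof, with $T$ used to convert an arbitrary decomposition $u = x + y$ in $H + D$ into the decomposition $u = Tu = Tx + Ty$ in $H_0 + (D \cap H_0)$. The only (cosmetic) difference is that you take the infimum to get the pointwise inequality $K_{(H_0, D\cap H_0)}(u,t) \le M\, K_{(H,D)}(u,t)$ before integrating, whereas the paper fixes a near-optimal decomposition $f(t) + g(t)$ for each $t$ and integrates the resulting chain of inequalities directly; the two are interchangeable.
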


 \begin{proof}

  Since $H_0\subset H$ and $D\cap H_0\subset D$, it follows from the definition \eqref{is-def} of the interpolation spaces that
    $$
    (H_0, D \cap H_0)_\theta\subset (H,D)_\theta\cap H_0
    $$
    with
    $$
    \|u\|_{\theta}\le C\|u\|_{0,\theta},
    $$
    where $\|\cdot\|_{0,\theta}$ denotes the norm in $(H_0,D\cap H_0)_\theta$ (and $\|\cdot\|_\theta$ is the norm in $(H,D)_\theta$).

    Now suppose that $u\in (H,D)_\theta\cap H_0$; then for each $t>0$ we can find $f(t)\in H$ and $g(t)\in D$ such that we can write
    $$
    u=f(t)+g(t)\qquad\mbox{with}\qquad \|f(t)\|_H^2+t^2\|g(t)\|_D^2\le 2K(u,t)^2;
    $$
    then
    $$
    \int_{0}^{\infty} t^{-2\theta-1}\left(\|f(t)\|_H^2+t^2\|g(t)\|_D^2\right)\,\d t\le 2\|u\|_\theta^2.
    $$
    Now since $u\in H_0$ and $T|_{H_0}={\rm Id}$ we also have
    $$
    u=Tu=Tf(t)+Tg(t),
    $$
    with $Tf(t)\in H_0$ and $Tg(t)\in D\cap H_0$, so that
    \begin{align*}
    \|u\|_{0,\theta}^2&\le\int_{0}^{\infty} t^{-2\theta-1}\left(\|Tf(t)\|_H^2+t^2\|Tg(t)\|_D^2\right)\,\d t\\
    &\le C^2\int_{0}^{\infty} t^{-2\theta-1}\left(\|f(t)\|_H^2+t^2\|g(t)\|_D^2\right)\,\d t\\
    &\le 2C^2\|u\|_\theta^2,
    \end{align*}
    i.e. $\|u\|_{0,\theta}\le C'\|u\|_\theta$, from which the conclusion follows.
  \end{proof}

\section{Identifying fractional power spaces}

\sbsection{Sobolev spaces and interpolation}

  We first recall how fractional Sobolev spaces are defined using interpolation, and some of their properties. It is then relatively straightforward to give explicit characterisations of the fractional power spaces of the Dirichlet Laplacian and the Stokes operator.

For non-integer $s$ the space $H^s(\Omega)$ is defined by setting
$$
H^{k\theta}(\Omega):=(L^2(\Omega),H^k(\Omega))_\theta,\qquad 0<\theta<1,
$$
for any integer $k$ (equation (I.9.1) in \cite{LionsMagenes1}); this definition is independent of $k$ and is consistent with the standard definition whenever $k\theta$ is an integer, so we have
\begin{equation}
(H^{s_1}(\Omega),H^{s_2}(\Omega))_\theta=H^{(1-\theta)s_1+\theta s_2}(\Omega),\qquad s_1<s_2,\ 0<\theta<1,\label{Hsinterp}
\end{equation}
see \cite[Theorem I.9.6]{LionsMagenes1}. Defined in this way $H^s(\Omega)$ is the set of restrictions to $\Omega$ of functions in $H^s(\R^n)$ \cite[Theorem I.9.1]{LionsMagenes1}.

For all $s\ge0$ we define
$$
H_0^s(\Omega):=\mbox{completion of }C_c^\infty(\Omega)\mbox{ in }H^s(\Omega);
$$
for $0\le s\le 1/2$ we have $H_0^s(\Omega)=H^s(\Omega)$ \cite[Theorem I.11.1]{LionsMagenes1}.

\sbsection{Fractional power spaces of Dirichlet Laplacian}\label{sec:fracLap}

We now consider the case when $A=-\Delta$ is the negative Dirichlet Laplacian on a bounded domain $\Omega$; to avoid technicalities we assume that $\partial\Omega$ is smooth.  From standard regularity results for weak solutions, see Theorem 8.12 in Gilbarg \& Trudinger \cite{GilbargTrudinger} or Section 6.3 in Evans \cite{Evans}, for example, we know that $D(A)=H^2(\Omega)\cap H_0^1(\Omega)$. The following result is well known, but we provide a proof (after the discussion following Proposition 4.5 in Constantin \& Foias \cite{ConstantinFoias}) for the sake of completeness.

\begin{lemma}\label{half}
If $A$ is the negative Dirichlet Laplacian on $\Omega$ then
$$D(A^{1/2})=H_0^1(\Omega).$$
\end{lemma}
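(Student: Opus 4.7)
The plan is to work directly from the eigenfunction-expansion definition of $D(A^{1/2})$ in \eqref{As-def} and show that $\|A^{1/2}u\|=\|\nabla u\|_{L^2}$, which pins down $D(A^{1/2})$ as $H_0^1(\Omega)$. The invocation of real interpolation (Lemma \ref{fps}) is therefore not needed here; the identification is effected by the key relation $\langle \nabla w_i,\nabla w_j\rangle=\lambda_j\delta_{ij}$ for the eigenfunctions, combined with standard density/approximation arguments.

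First, I would record that each eigenfunction $w_j$ lies in $D(A)=H^2(\Omega)\cap H_0^1(\Omega)$ and satisfies $-\Delta w_j=\lambda_j w_j$. Integration by parts (legitimate since $w_j\in H^2\cap H_0^1$) yields $\langle\nabla w_i,\nabla w_j\rangle=\lambda_j\langle w_i,w_j\rangle=\lambda_j\delta_{ij}$. For any finite sum $u_N=\sum_{j=1}^N \hat u_j w_j\in H_0^1(\Omega)$ this gives the identity $\|\nabla u_N\|^2=\sum_{j=1}^N\lambda_j|\hat u_j|^2=\|A^{1/2}u_N\|^2$.

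For the inclusion $D(A^{1/2})\subset H_0^1(\Omega)$, I take $u=\sum_{j=1}^\infty \hat u_j w_j\in D(A^{1/2})$ and observe that the partial sums $u_N$ form a Cauchy sequence in $H_0^1(\Omega)$ (since $\|\nabla(u_N-u_M)\|^2=\sum_{M<j\le N}\lambda_j|\hat u_j|^2\to 0$) that converges to $u$ in $L^2(\Omega)$; completeness of $H_0^1(\Omega)$ then forces $u\in H_0^1(\Omega)$ with $\|\nabla u\|=\|A^{1/2}u\|$. For the reverse inclusion, given $u\in H_0^1(\Omega)$ I set $\hat u_j:=\langle u,w_j\rangle$. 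The density of $C_c^\infty(\Omega)$ in $H_0^1(\Omega)$, combined with $w_j\in H^2$, justifies the integration-by-parts identity $\langle\nabla u,\nabla w_j\rangle=-\langle u,\Delta w_j\rangle=\lambda_j\hat u_j$. Consequently
$$
\langle\nabla u_N,\nabla u\rangle=\sum_{j=1}^N \hat u_j\langle\nabla w_j,\nabla u\rangle=\sum_{j=1}^N\lambda_j|\hat u_j|^2=\|\nabla u_N\|^2,
$$
and Cauchy--Schwarz gives $\|\nabla u_N\|\le\|\nabla u\|$. Letting $N\to\infty$ yields $\sum_{j=1}^\infty \lambda_j|\hat u_j|^2\le\|\nabla u\|^2<\infty$, so $u\in D(A^{1/2})$.

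The only delicate point is justifying $\langle\nabla u,\nabla w_j\rangle=\lambda_j\langle u,w_j\rangle$ when $u\in H_0^1(\Omega)$ is not a priori in $H^2(\Omega)$; this is handled cleanly by approximating $u$ in $H^1$-norm by $C_c^\infty(\Omega)$ functions and using $w_j\in H^2$. Everything else is bookkeeping with the orthogonal expansion. This yields simultaneously that $D(A^{1/2})=H_0^1(\Omega)$ as sets and that $\|A^{1/2}u\|=\|\nabla u\|$ is an equivalent norm (indeed, an equality).
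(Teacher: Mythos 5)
Your proposal is correct and takes essentially the same approach as the paper: both arguments hinge on the integration-by-parts identity $\langle\nabla w_i,\nabla w_j\rangle=\lambda_j\delta_{ij}$ (the paper's relation \eqref{useful}), used to show that the eigenfunction expansion of any $u\in D(A^{1/2})$ converges in $H_0^1(\Omega)$ with $\|\nabla u\|=\|A^{1/2}u\|$. The only difference is the final step: the paper concludes by showing that the closed subspace $D(A^{1/2})\subset H_0^1(\Omega)$ has trivial orthogonal complement, whereas you prove the reverse inclusion directly via the Bessel-type inequality $\|\nabla u_N\|\le\|\nabla u\|$ --- a minor variation that makes the norm equality explicit in both directions.
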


\begin{proof} We have
\be{useful}
\<Au,v\>=\<\kern-3pt-\kern-3pt\Delta u,v\>=\<\nabla u,\nabla v\>
\ee
whenever $u\in D(A)$ and $v\in H_0^1(\Omega)$, see the proof of Proposition 4.2 in Constantin \& Foias \cite{ConstantinFoias} (their proof is given for the Stokes operator, but it works equally well in the case of the Laplacian).

If we let $(w_j)$ and $(\lambda_j)$ be the eigenfunctions and corresponding eigenvalues of $A$, then $(w_j)$ form a basis for $L^2(\Omega)$ (so also for $H_0^1(\Omega)$) and since
$\lambda_j^{-1/2}w_j\in D(A)\subset H_0^1$ we can use \eqref{useful} to write
\begin{align*}
\delta_{jk}=\<\lambda_j^{-1/2}w_j,\lambda_k^{-1/2}w_k\>_{D(A^{1/2})}&=\<A(\lambda_j^{-1/2}w_j),\lambda_k^{-1/2}w_k\>\\
&=\<\nabla (\lambda_j^{-1/2}w_j),\nabla (\lambda_k^{-1/2}w_k)\>.
\end{align*}
It follows that $D(A^{1/2})$ is a closed subspace of $H_0^1$. [Recall from \eqref{As-def} that $D(A^{1/2})$ is defined as the collection of certain convergent eigenfunction expansions; the above equality shows that if this expansion converges in the $D(A^{1/2})$ norm then it also converges in the norm of $H_0^1$.]

If $v\in H_0^1$ with $\<v,u\>_{H_0^1}=0$ for all $u\in D(A^{1/2})$ then for every $j$
$$0=\<\nabla v,\nabla w_j\>=\<v,Aw_j\>=\lambda_j\<v,w_j\>$$
and so $v=0$, which shows that $D(A^{1/2})=H_0^1$.
\end{proof}

We can now appeal to results from Lions \& Magenes to deal with the range $0<\theta<1/2$.

\begin{corollary}\label{0thalf}
    If $A$ is the negative Dirichlet Laplacian on $\Omega$ then
$$
    D(A^\theta)=\begin{cases}H^{2\theta}(\Omega)&0<\theta< 1/4,\\
   H^{1/2}_{00}(\Omega)&\theta=1/4,\\
    H_0^{2\theta}(\Omega)&1/4<\theta<1/2.
    \end{cases}
    $$
   where $H_{00}^{1/2}(\Omega)$ consists of all $u\in H^{1/2}(\Omega)$ such that
    $$
    \int_\Omega\rho(x)^{-1}|u(x)|^2\,\d x<\infty,
    $$
    with $\rho(x)$ any $C^\infty$ function comparable to $\dist(x,\partial\Omega)$.
\end{corollary}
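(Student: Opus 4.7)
The strategy is to reduce the problem to one of interpolation between $L^2(\Omega)$ and $H_0^1(\Omega)$, and then invoke the Lions \& Magenes identification of these interpolation spaces. The first identity in Corollary \ref{reit} gives
$$D(A^\theta) = (H, D(A^{1/2}))_{2\theta}\qquad\text{for }0 < \theta < 1/2,$$
and Lemma \ref{half} rewrites this as $(L^2(\Omega), H_0^1(\Omega))_{2\theta}$ with interpolation parameter $2\theta \in (0,1)$. The three sub-cases of the corollary correspond exactly to $2\theta$ lying below, at, or above the exceptional value $1/2$.

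For $0 < \theta < 1/4$ (so $2\theta < 1/2$), I would exploit the fact that $H_0^1$ is a closed subspace of $H^1$ carrying the same norm, which by definition \eqref{is-def} yields $(L^2, H_0^1)_{2\theta} \subset (L^2, H^1)_{2\theta}$; the right-hand side equals $H^{2\theta}$ by \eqref{Hsinterp}. For the reverse inclusion I would use that $H^{2\theta} = H_0^{2\theta}$ for $2\theta \le 1/2$ (already quoted in the preceding subsection from \cite{LionsMagenes1}), the density of $C_c^\infty(\Omega)$ in $H_0^{2\theta}$, and the containment $C_c^\infty(\Omega) \subset H_0^1$; a short argument with \eqref{is-def} then places the approximants, and hence their limit, in $(L^2, H_0^1)_{2\theta}$. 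For $1/4 < \theta < 1/2$ (so $1/2 < 2\theta < 1$) the identification $(L^2, H_0^1)_{2\theta} = H_0^{2\theta}$ is Theorem I.11.6 of \cite{LionsMagenes1}.

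The hard part, and the reason $\theta = 1/4$ must be singled out, is the borderline value $2\theta = 1/2$. There the interpolation space is strictly smaller than $H_0^{1/2} = H^{1/2}$; it equals the Lions \& Magenes space $H_{00}^{1/2}(\Omega)$ and admits the weighted-integral characterisation
$$\int_\Omega \rho(x)^{-1}|u(x)|^2\,\d x<\infty$$
stated in the corollary. I would cite both identifications directly from Theorem I.11.7 of \cite{LionsMagenes1} rather than reprove them, since the description of $H_{00}^{1/2}$ via this weighted integral rests on the Lions \& Magenes theory of traces on $\partial\Omega$ and does not admit a short self-contained derivation within the scope of this note.
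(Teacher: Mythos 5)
Your reduction is exactly the paper's: Corollary \ref{reit} together with Lemma \ref{half} converts $D(A^\theta)$ into $(L^2,H_0^1)_{2\theta}$ for $0<\theta<1/2$, and for $\theta=1/4$ and $1/4<\theta<1/2$ you then cite Theorems I.11.7 and I.11.6 of \cite{LionsMagenes1}, which is precisely what the paper does (it cites these two theorems for all three ranges at once and stops there).

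The problem is your attempted self-contained treatment of $0<\theta<1/4$. The forward inclusion $(L^2,H_0^1)_{2\theta}\subset(L^2,H^1)_{2\theta}=H^{2\theta}$ is fine, but the reverse inclusion as you sketch it has a genuine gap. Given $u\in H^{2\theta}=H_0^{2\theta}$ and approximants $u_n\in C_c^\infty(\Omega)$ with $u_n\to u$ in $H^{2\theta}$, each $u_n$ does lie in $H_0^1\subset(L^2,H_0^1)_{2\theta}$; but this does not place the limit $u$ in $(L^2,H_0^1)_{2\theta}$. Membership of an $H^{2\theta}$-limit in the interpolation space requires the sequence to be bounded (or Cauchy) in the \emph{interpolation} norm, and at this stage the only norm comparison available is the one coming from your forward inclusion, which points the wrong way: $\|\cdot\|_{H^{2\theta}}\le C\|\cdot\|_{(L^2,H_0^1)_{2\theta}}$. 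What is missing is exactly the nontrivial estimate $\|\phi\|_{(L^2,H_0^1)_{2\theta}}\le C\|\phi\|_{H^{2\theta}}$ for $\phi\in C_c^\infty(\Omega)$; this is where the real content lies, and it is not a ``short argument with \eqref{is-def}''. The paper flags this exact subtlety in the remark following the corollary: the density argument alone yields only the inclusion $D(A^\theta)\subseteq H_0^{2\theta}$, and equality needs the norm equivalence, obtained by extending functions by zero to $\R^n$ (possible precisely because $2\theta<1/2$, \cite[Theorem I.11.4]{LionsMagenes1}) and running the explicit Fourier-side computation as in Example 1.1.8 of \cite{Lunardi}. The simplest repair is to do for this range what you already do for $1/4<\theta<1/2$: cite \cite[Theorem I.11.6]{LionsMagenes1}, which covers all $2\theta\neq 1/2$ and gives $(L^2,H_0^1)_{2\theta}=H_0^{2\theta}=H^{2\theta}$ directly.
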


\begin{proof}
  We note that
  $$
  D(A^{\theta/2})=(H,D(A^{1/2}))_{\theta}=(L^2,H_0^1)_\theta,
  $$
  and then the expressions on the right-hand side follow immediately from \cite[Theorems I.11.6 and I.11.7]{LionsMagenes1}.
\end{proof}

  Note that the result above is relatively elementary for $\theta\neq 1/4$: since $w_j \in D(A^r)$ is a countable sequence whose linear span is dense in $D(A^s)$, $D(A^r)$ is always dense in $D(A^s)$ for $0\le s<r\le 1$; since Corollary \ref{reit}  shows that $D(A^{1/2})=H_0^1(\Omega)$, it follows that $H_0^1(\Omega)$ is dense in $D(A^\theta)$ for $\theta<1/2$, and so, since $\|u\|_{H^{2\theta}}\le C_\theta\|u\|_{D(A^\theta)}$,
  \begin{align*}
  D(A^\theta)&=\{\mbox{completion of }H_0^1(\Omega)\mbox{ in the norm of }D(A^\theta)\}\\
  &\subseteq\{\mbox{completion of }H_0^1(\Omega)\mbox{ in the norm of }H^{2\theta}(\Omega)\}\\
   &=\{\mbox{completion of }C_c^\infty(\Omega)\mbox{ in the norm of }H^{2\theta}(\Omega)\}=H^{2\theta}_0(\Omega).
  \end{align*}

    To show the equivalence of the $H^{2\theta}$ and $D(A^\theta)$ norms (and hence equality of $D(A^{\theta})$ and $H_0^{2\theta}$) note that functions in $L^2$, $H^s$ for $0<s<1/2$, and $H_0^s$ for $1/2<s\le 1$ can be extended by zero to functions in $H^s(\R^n)$ without increasing their norms \cite[Theorem I.11.4]{LionsMagenes1}; an argument following that of Example 1.1.8 in Lunardi \cite{Lunardi} then shows that the norms in $D(A^\theta)$ and in $H^{2\theta}$ are equivalent provided that $\theta\neq1/4$.

   Since functions in $H^{1/2}(\Omega)=H^{1/2}_0(\Omega)$ \textit{cannot} be extended by zero to functions in $H^{1/2}(\R^n)$ \cite[Theorem I.11.4]{LionsMagenes1} the case of $\theta=1/4$ is significantly more involved.

To deal with the range $1/2<\theta<1$ we will use the intersection lemma (Lemma \ref{isitnew}) and the following simple result.

\begin{lemma}\label{lm}
    Let $u\in H^s(\Omega)$ with $s=1$ or $s=2$, and let $w\in H_0^1(\Omega)$ solve
    \begin{equation}\label{weakone}
    \<\nabla w,\nabla\phi\>=\<\nabla u,\nabla\phi\>\qquad\mbox{for all}\quad\phi\in H_0^1(\Omega).
    \end{equation}
        Then $u\mapsto w$ is a bounded linear map from $H^s(\Omega)$ into $H^s(\Omega)\cap H_0^1(\Omega)$ and $w=u$ whenever $u\in H_0^1(\Omega)$.
\end{lemma}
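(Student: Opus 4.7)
The plan is first to solve the variational problem in $H_0^1(\Omega)$ by the Riesz representation theorem applied to the inner product $(v,\phi)\mapsto\<\nabla v,\nabla\phi\>$ (which is equivalent to the $H_0^1$ inner product by Poincar\'e's inequality), obtaining a unique $w\in H_0^1(\Omega)$ for each $u\in H^1(\Omega)$. Linearity of $u\mapsto w$ is then immediate. If $u\in H_0^1(\Omega)$ then $u$ itself satisfies \eqref{weakone}, so by uniqueness $w=u$.

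For the case $s=1$, testing against $\phi=w$ yields $\|\nabla w\|^2=\<\nabla u,\nabla w\>\le\|\nabla u\|\,\|\nabla w\|$, hence $\|\nabla w\|\le\|\nabla u\|$, and Poincar\'e's inequality then gives $\|w\|_{H^1}\le C\|u\|_{H^1}$, so $u\mapsto w$ is bounded $H^1(\Omega)\to H_0^1(\Omega)$.

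For the case $s=2$, since $u\in H^2(\Omega)$ and $\phi\in H_0^1(\Omega)$, integration by parts (first for $\phi\in C_c^\infty(\Omega)$ and then extended by density, using that the boundary trace of $\phi$ vanishes) rewrites \eqref{weakone} as $\<\nabla w,\nabla\phi\>=-\<\Delta u,\phi\>$ for all $\phi\in H_0^1(\Omega)$; that is, $w\in H_0^1(\Omega)$ weakly solves $-\Delta w=-\Delta u$ with right-hand side in $L^2(\Omega)$. Since $\partial\Omega$ is smooth, the standard $H^2$-regularity theory for the homogeneous Dirichlet problem (e.g.\ Theorem 8.12 in \cite{GilbargTrudinger}, already cited earlier to identify $D(-\Delta)$) yields $w\in H^2(\Omega)$ with $\|w\|_{H^2}\le C\|\Delta u\|_{L^2}\le C\|u\|_{H^2}$. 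This elliptic regularity step is the only substantive ingredient; everything else is an immediate consequence of the variational formulation.
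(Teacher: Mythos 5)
Your proof is correct and follows essentially the same route as the paper's: the Riesz representation theorem for existence, uniqueness, and the case $u\in H_0^1(\Omega)$; the test function $\phi=w$ for the $s=1$ bound; and the observation that \eqref{weakone} is the weak form of $-\Delta w=-\Delta u$ with zero boundary data, so that standard elliptic regularity gives $\|w\|_{H^2}\le C\|\Delta u\|_{L^2}\le C\|u\|_{H^2}$ for $s=2$. The only difference is cosmetic: you spell out the integration-by-parts/density step and cite Gilbarg \& Trudinger where the paper cites Evans.
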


  \begin{proof}
    The Riesz Representation Theorem guarantees that (\ref{weakone}) has a unique solution $w\in H_0^1(\Omega)$ for every $u\in H^1(\Omega)$. That $w=u$ when $u\in H_0^1(\Omega)$ is then immediate, and the choice $\phi=w$ guarantees that $\|\nabla w\|_{L^2}\le\|\nabla u\|_{L^2}$. To deal with the $s=2$ case, simply note that (\ref{weakone}) is the weak form of the equation
    $$
    -\Delta w=-\Delta u,\qquad w|_{\partial\Omega}=0,
    $$
    and standard regularity results for this elliptic problem (e.g.\ Section 6.3 in Evans \cite{Evans}) guarantee that $\|w\|_{H^2}\le C\|\Delta u\|_{L^2}\le C\|u\|_{H^2}$.
\end{proof}

We can now characterise $D(A^\theta)$ for $1/2<\theta<1$.

\begin{corollary}\label{halft1}
    If $A$ is the negative Dirichlet Laplacian on $\Omega$ then
  $$
  D(A^\theta)=  H^{2\theta}(\Omega)\cap H_0^1(\Omega)\qquad\mbox{for}\qquad 1/2<\theta<1.
$$
\end{corollary}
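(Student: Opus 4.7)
The plan is to apply the reiteration Corollary \ref{reit} to reduce the problem to a real interpolation question between $H_0^1(\Omega)$ and $H^2(\Omega)\cap H_0^1(\Omega)$, and then use the intersection Lemma \ref{isitnew} (with the projection supplied by Lemma \ref{lm}) to ``pull out'' the constraint $H_0^1$ from the interpolation. For $\theta\in(1/2,1)$, write $\theta=(1+s)/2$ with $s\in(0,1)$. By the second part of Corollary \ref{reit} together with Lemma \ref{half} and the known identity $D(A)=H^2(\Omega)\cap H_0^1(\Omega)$, we have
$$
D(A^\theta)=D(A^{(1+s)/2})=\bigl(D(A^{1/2}),D(A)\bigr)_s=\bigl(H_0^1(\Omega),H^2(\Omega)\cap H_0^1(\Omega)\bigr)_s.
$$

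Next I would apply Lemma \ref{isitnew} with $H=H^1(\Omega)$, $D=H^2(\Omega)$, and $H_0=H_0^1(\Omega)$ (which is a closed subspace of $H^1(\Omega)$ in the same norm, since it is the completion of $C_c^\infty(\Omega)$ in $H^1$). The required operator $T\colon H^1(\Omega)\to H_0^1(\Omega)$ is exactly the one provided by Lemma \ref{lm}: $Tu$ is the unique $w\in H_0^1(\Omega)$ satisfying $\<\nabla w,\nabla\phi\>=\<\nabla u,\nabla\phi\>$ for all $\phi\in H_0^1(\Omega)$. That lemma asserts precisely the three properties we need: $T$ is bounded from $H^1$ to $H_0^1$, $T|_{H_0^1}=\mathrm{Id}$, and $T$ maps $H^2(\Omega)$ boundedly into $H^2(\Omega)\cap H_0^1(\Omega)$. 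Lemma \ref{isitnew} therefore yields
$$
\bigl(H_0^1(\Omega),H^2(\Omega)\cap H_0^1(\Omega)\bigr)_s=\bigl(H^1(\Omega),H^2(\Omega)\bigr)_s\cap H_0^1(\Omega).
$$

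To finish, I would invoke the Sobolev interpolation identity \eqref{Hsinterp} to evaluate $(H^1(\Omega),H^2(\Omega))_s=H^{1+s}(\Omega)=H^{2\theta}(\Omega)$, giving
$$
D(A^\theta)=H^{2\theta}(\Omega)\cap H_0^1(\Omega),\qquad 1/2<\theta<1,
$$
as required; the endpoint $\theta=1$ is the already-noted identification $D(A)=H^2(\Omega)\cap H_0^1(\Omega)$.

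The only non-routine step is verifying the hypotheses of Lemma \ref{isitnew}, and the potential obstacle there is exhibiting a single linear projection $T$ that is simultaneously bounded at both the $H^1$ and $H^2$ levels and fixes $H_0^1$; this is exactly what Lemma \ref{lm} was designed for, so the argument reduces cleanly to the elliptic regularity estimate contained in that lemma. No delicate endpoint issue arises here (in contrast to the $\theta=1/4$ case), because $H_0^{1+s}(\Omega)$ does not coincide with $H^{1+s}(\Omega)$ for $s\in(0,1)$ and the trace on $\partial\Omega$ is well-defined throughout this range, which is precisely why the condition ``$u\in H_0^1(\Omega)$'' survives interpolation.
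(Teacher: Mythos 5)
Your proposal is correct and follows essentially the same route as the paper's own proof: reiteration via Corollary \ref{reit} to get $D(A^\theta)=(H_0^1,H^2\cap H_0^1)_{2\theta-1}$, then Lemma \ref{isitnew} with $H=H^1$, $D=H^2$, $H_0=H_0^1$ and the map $T$ from Lemma \ref{lm}, and finally the identity \eqref{Hsinterp}. The only difference is notational (your $s$ is the paper's $2\theta-1$), plus some closing commentary that is harmless but not needed for the argument.
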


\begin{proof}
Corollary \ref{reit} guarantees that
$$
D(A^\theta)=(D(A^{1/2}),D(A))_{2\theta-1}=(H_0^1,H^2\cap H_0^1)_{2\theta-1}.
$$
Choosing $H=H^1(\Omega)$, $H_0=H_0^1(\Omega)$, and $D=H^2(\Omega)$ in Lemma \ref{isitnew}, we can let $T$ be the map $u\mapsto w$ defined in Lemma \ref{lm} to deduce that
$$
(H_0^1,H^2\cap H_0^1)_{2\theta-1}=(H^1,H^2)_{2\theta-1}\cap H_0^1=H^{2\theta}\cap H_0^1,
$$
using (\ref{Hsinterp}).
\end{proof}

\sbsection{Fractional power spaces of the Stokes operator}\label{sec:fracStokes}

Let $\P$ denote the `Leray projection', i.e.\ the orthogonal projection in $L^2(\Omega)$ onto
  $$
H_\sigma(\Omega):=\mbox{completion of }\{\phi\in C_c^\infty(\Omega):\ \nabla\cdot\phi=0\}\mbox{ in the norm of }L^2(\Omega).
$$
Since $\P$ is an orthogonal projection we have the symmetry property
\be{Psymm}
\<\P u,v\>=\<u,\P v\>\qquad\mbox{for every}\ u,v\in L^2(\Omega).
\ee

The Stokes operator $\A$ on $\Omega$ is defined as $\A :=\P A$, where $A$ is the negative Dirichlet Laplacian, and has domain
$$
D(\A)=H^2(\Omega)\cap H_0^1(\Omega)\cap H_\sigma(\Omega)=D(A)\cap H_\sigma(\Omega),
$$
see Theorem 3.11 in Constantin \& Foias \cite{ConstantinFoias}. It is a positive unbounded self-adjoint operator with compact inverse (see Chapter 4 in \cite{ConstantinFoias}), so still falls within the general framework we have considered above.

Now we show that $D(\A^\theta)=D(A^\theta)\cap H_\sigma$. We can do this using the `intersection lemma' (Lemma \ref{isitnew}) via an appropriate choice of the mapping $T$: our choice is inspired by the proof of this equality due to Fujita \& Morimoto \cite{Fujita-Morimoto}, who use the trace-based formulation of interpolation spaces.

\begin{lemma}\label{AnA}
  For every $0<\theta<1$ we have $D(\A^\theta)=D(A^\theta)\cap H_\sigma$ with $\|\A^\theta u\|$ and $\|A^\theta u\|$ equivalent norms on $D(\A^\theta)$; in particular, the inclusion $D(\A^\theta)\subset D(A^\theta)$ is continuous.
  \end{lemma}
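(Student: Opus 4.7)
The plan is to apply the intersection lemma (Lemma \ref{isitnew}) with $H = L^2(\Omega)$, $H_0 = H_\sigma$, and $D = D(A) = H^2(\Omega)\cap H^1_0(\Omega)$; by the identification $D(\A) = D(A)\cap H_\sigma$ recalled just before the statement of the lemma, we have $D\cap H_0 = D(\A)$. Lemma \ref{fps} applied to $A$ on $L^2$ and to $\A$ on the Hilbert space $H_\sigma$ (both positive self-adjoint with compact inverse) gives $(L^2, D(A))_\theta = D(A^\theta)$ and $(H_\sigma, D(\A))_\theta = D(\A^\theta)$. The conclusion of Lemma \ref{isitnew} then reads exactly $D(\A^\theta) = D(A^\theta)\cap H_\sigma$, with the equivalence of the norms $\|\A^\theta \cdot\|$ and $\|A^\theta \cdot\|$ on $D(\A^\theta)$ built into its conclusion. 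So everything reduces to producing the single bounded linear map $T\colon L^2 \to H_\sigma$ required by Lemma \ref{isitnew}.

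The natural candidate is $T = \P$, which is bounded $L^2 \to H_\sigma$ and equal to the identity on $H_\sigma$; but it does \emph{not} map $D(A)$ into $D(\A)$. Indeed, for $u \in D(A) = H^2 \cap H^1_0$ the Helmholtz decomposition $u = \P u + \nabla p$, with $p$ solving the Neumann problem $\Delta p = \nabla\cdot u$, $\partial_\nu p|_{\partial\Omega} = 0$, gives $\P u \in H^2 \cap H_\sigma$ with vanishing normal boundary trace but tangential trace $-\nabla_\tau p|_{\partial\Omega}$, which is generically nonzero; hence $\P u \notin H^1_0$ and so $\P u \notin D(\A)$. A subtler $T$ is required.

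Following the idea behind the Fujita \& Morimoto proof (which is carried out in the trace formulation of real interpolation), the fix is to modify $\P$ with a correction coming from a right inverse $B$ of the divergence with the elliptic regularity $B\colon H^s_0(\Omega) \to H^{s+1}_0(\Omega)$ for $s\ge 0$. On $D(A)$ the formula $Tu := u - B(\nabla\cdot u)$ is divergence-free and lies in $H^2 \cap H^1_0$, hence in $D(\A)$, with $\|Tu\|_{H^2} \le C\|u\|_{H^2}$; on $H_\sigma$ (where $\nabla\cdot u = 0$) it reduces to $Tu = u$. The main obstacle is to extend this formula to a bounded map $T\colon L^2 \to H_\sigma$ that still restricts to the identity on $H_\sigma$, which calls for an $H^{-1} \to L^2$ extension of $B$ together with a correction removing the weak normal trace $u\cdot\nu|_{\partial\Omega}$, arranged so that every correction vanishes on elements of $H_\sigma$. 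Once this $T$ is produced, Lemma \ref{isitnew} closes the argument immediately for every $\theta \in (0,1)$.
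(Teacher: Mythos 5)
Your reduction to Lemma \ref{isitnew} is exactly the right (and the paper's) strategy, with the same choices $H=L^2(\Omega)$, $H_0=H_\sigma$, $D=D(A)$, and your observation that $T=\P$ fails --- because the Leray projection destroys the Dirichlet condition, leaving a nonzero tangential trace --- correctly identifies why the lemma is not trivial. But the proposal is not a proof: the entire content of the argument is the construction of the map $T$, and you explicitly leave it open (``the main obstacle is to extend this formula\dots'', ``once this $T$ is produced\dots''). Worse, the Bogovskii-type construction you sketch already has a gap at the $D(A)$ level: for $u\in D(A)=H^2\cap H^1_0$ the divergence $\nabla\cdot u$ lies in $H^1(\Omega)$ but in general \emph{not} in $H^1_0(\Omega)$ (the divergence of a function vanishing on the boundary need not vanish on the boundary), so the mapping property you assume for $B$, namely $B\colon H^s_0(\Omega)\to H^{s+1}_0(\Omega)$, does not yield $B(\nabla\cdot u)\in H^2$; the Bogovskii operator does not gain a derivative on functions without vanishing trace. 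So neither the boundedness $D(A)\to D(\A)$ nor the $L^2\to H_\sigma$ extension is actually established.

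The missing idea is to let a Stokes solve do the correction for you: the paper takes $\tilde T:=\A^{-1}\P A$ on $D(A)$. Boundedness $D(A)\to D(\A)$ is then immediate from Stokes elliptic regularity ($\|\A^{-1}g\|_{H^2}\le C\|g\|_{L^2}$ for $g\in H_\sigma$), and --- this is the key step your sketch lacks --- $\tilde T$ extends to a bounded map $T\colon L^2\to H_\sigma$ by \emph{duality} rather than by constructing anything further: for $\psi\in H_\sigma$ and $\phi\in D(A)$, self-adjointness of $A$ and $\A$ together with the symmetry of $\P$ give
$$
\langle\psi,\tilde T\phi\rangle=\langle\A^{-1}\psi,\P A\phi\rangle=\langle\A^{-1}\psi,A\phi\rangle=\langle A\A^{-1}\psi,\phi\rangle,
$$
so that $|\langle\psi,\tilde T\phi\rangle|\le\|A\A^{-1}\psi\|_{L^2}\|\phi\|_{L^2}\le C\|\psi\|_{L^2}\|\phi\|_{L^2}$, again by Stokes regularity. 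Density of $D(A)$ in $L^2$ then gives the extension $T\colon L^2\to H_\sigma$, which is the identity on $H_\sigma$ (expand in eigenfunctions of $\A$), and Lemma \ref{isitnew} finishes the proof exactly as you intended. In short: your framework and your diagnosis of the difficulty are correct, but the object whose existence carries the whole lemma is never produced, and the route you propose for it runs into a genuine regularity obstruction that the operator $\A^{-1}\P A$ is designed to avoid.
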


\begin{proof}
First observe that Lemma \ref{fps} gives
  $$
  D(\A^\theta)=(H_\sigma,D(A)\cap H_\sigma)_\theta.
  $$

In order to apply the intersection result of Lemma \ref{isitnew} we consider the operator $\tilde T\:D(A)\to D(\A)$ defined by setting
$$
\tilde T:=\A^{-1}\P A.
$$
As an operator from $D(A)$ into $D(\A)$ this is bounded, due to elliptic regularity results for the Stokes operator ($\|\A^{-1}g\|_{H^2}\le C\|g\|_{L^2}$ for $g\in H_\sigma$, see Theorem 3.11 in Constantin \& Foias \cite{ConstantinFoias}, for example): for any $f\in D(A)$ we have
$$
\|\tilde Tf\|_{D(\A)}\le \|\tilde Tf\|_{H^2(\Omega)}\le C\|\P Af\|_{L^2(\Omega)}\le C\|Af\|_{L^2(\Omega)}= C\|f\|_{D(A)}.
$$

We now extend $\tilde T$ to an operator $T\:L^2\to H_\sigma$: if we take
$\psi\in H_\sigma$ and $\phi\in D(A)$ then, since both $\A$ and $A$ are self-adjoint and $\P$ is symmetric \eqref{Psymm},
  \begin{align*}
  |\<\psi,\tilde T\phi\>|&=  |\<\psi,\A^{-1}\P A\phi\>|=|\<\A^{-1}\psi,\P A\phi\>|\\
  &=|\<\A^{-1}\psi,A\phi\>|=|\<A \A^{-1}\psi,\phi\>|\\
  &\le \|\A^{-1}\psi\|_{H^2}\|\phi\|_{L^2}\\
  &\le C\|\psi\|_{L^2}\|\phi\|_{L^2},
  \end{align*}
  which shows that
  $$
  \|\tilde T\phi\|_{H_\sigma}\le C\|\phi\|_{L^2}.
  $$
  Since $\tilde T$ is linear and $D(A)$ is dense in $L^2$ it follows that we can extend $\tilde T$ uniquely to an operator $T\:L^2(\Omega)\to H_\sigma$ as claimed.

Note that $T$ is the identity on $H_\sigma$: this can be seen by expanding $u\in H_\sigma$ in terms of the eigenfunctions of $\A$.

We now obtain the result by applying Lemma \ref{isitnew} choosing $H=L^2(\Omega)$, $H_0=H_\sigma$, $D=D(A)$, and letting $T\:L^2\to H_\sigma$ be the operator we have just constructed.
\end{proof}

\bibliography{biblio}
\bibliographystyle{acm}
\end{document}